\newtheorem{theorem}{Theorem}
\newtheorem{proposition}{Proposition}
\newtheorem{lemma}{Lemma}
\newtheorem{cor}{Corollary}
\newtheorem{ex}{Example}
\newtheorem{definition}{Definition}
\newcommand{\lsr}{{\check\rho}}
\newcommand{\trace}{{\mbox{trace}}}
\newcommand{\jsr}{{ \rho}}
\newcommand{\co}{{\mbox{Conv}}}
\newcommand{\re}{{\mathbb R}}
\newcommand{\ren}{{\mathbb R}^n}
\newcommand{\n}{{\mathbb N}}
\newcommand{\kron}{{\otimes}}
\newcommand{\conv}{{{\rm Conv} }}
\newcommand{\inter}{{{\rm int} }}
\title{On asymptotic properties of matrix semigroups with an invariant cone}%
\author{
Rapha\"el M. Jungers\footnote{ICTEAM Institute,
Universit\'e catholique de Louvain, 4 avenue Georges Lemaitre,
B-1348 Louvain-la-Neuve, Belgium. raphael.jungers@uclouvain.be. R.
Jungers is a FNRS fellow. } }
\date{October 20, 2011}
\begin{document}

\maketitle

\begin{abstract} 
Recently, several research efforts showed that the analysis of joint spectral characteristics of sets of matrices is greatly eased when these matrices share an invariant cone.
In this short note we prove two new results in this direction.

We prove that the joint spectral subradius is continuous in the neighborhood of sets of matrices that leave an embedded pair of cones invariant.   

We show that the (averaged) maximal spectral radius, as well as the maximal trace, of products of length $t,$ converge towards the joint spectral radius when the matrices share an invariant cone, and additionally one of them is primitive. 
\end{abstract}

\section{Introduction}

The Perron-Frobenius theorem is one of the most well known theorems in linear algebra.  It states strong properties that matrices with nonnegative entries enjoy.  Starting in the fifties, is has become clear that not only this theorem, but many other properties of nonnegative matrices can be generalized to much more general cases, the fundamental feature being of importance for a matrix $A$ is that there exists an \emph{invariant proper cone} $K,$ that is, a proper cone such that $AK\subset K.$  (A cone $K$ is said \emph{proper} if it is closed, convex, with nonempty interior, and contains no straight line.  Unless explicitly mentioned, all cones are supposed to be proper in the following.)
If $K$ is an invariant cone for $A,$ we say that $A$ is \emph{$K$-nonnegative}.  If $AK \subset \inter K,$ we say that $A$ is \emph{$K$-positive.} Finally if there exists a natural number $t$ such that $A^t$ is $K$-positive, we say that $A$ is \emph{primitive}. (All these definitions are obvious generalizations of the case of nonnegative matrices with $K=\ren_+$.)
As an example of the generalizations one can obtain, let us mention the \emph{generalized Perron-Frobenius theorem} (see for instance \cite{tam-schneider-core}).
\begin{theorem} \label{thm-pf}\emph{Generalized Perron-Frobenius Theorem.}
Let $K$ be a proper cone.
If a matrix $P$ is $K$-primitive, then it has a single eigenvalue of largest modulus, which moreover is a real positive number.
\end{theorem}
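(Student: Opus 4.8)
\noindent\emph{How I would prove Theorem~\ref{thm-pf}.} The plan is to reduce to the case where $P$ itself is $K$-positive, and then run the classical Perron argument through a projective-contraction principle. (As is standard for the notion of primitivity, I assume $P$ is $K$-nonnegative; this is used at one point below, and the statement can fail without it, e.g.\ for the all-negative $2\times 2$ matrix whose square is all-positive.)

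\emph{Step 1: reduction.} Choose $t$ with $Q:=P^{t}$ $K$-positive, and suppose the theorem is already proved for $Q$ in the sharp form: $s:=\jsr(Q)$ is a positive, algebraically simple eigenvalue of $Q$ that strictly dominates all other eigenvalues of $Q$ in modulus, with eigenvector $v\in\inter K$. Set $r:=s^{1/t}>0$. Since $P$ commutes with $Q$ it preserves $\ker(Q-sI)=\re v$, so $Pv=\mu_{0}v$ with $\mu_{0}\in\re$ and $\mu_{0}^{t}=s$; $K$-nonnegativity of $P$ gives $\mu_{0}v=Pv\in K$, which excludes $\mu_{0}\le 0$ because $v\in\inter K$ and $K\cap(-K)=\{0\}$, so $\mu_{0}=r$. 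Hence $r\in\sigma(P)$ and $\jsr(P)=\jsr(Q)^{1/t}=r$. If $\mu\in\sigma(P)$ with $|\mu|=r$, then $\mu^{t}\in\sigma(Q)$ has modulus $s$, so $\mu^{t}=s$ by strict dominance for $Q$; and since the algebraic multiplicity of $s$ in $P^{t}$ equals the sum over $\nu$ with $\nu^{t}=s$ of the algebraic multiplicities of $\nu$ in $P$ (a routine consequence of how $t$-th powers act on Jordan blocks with nonzero eigenvalue), simplicity of $s$ in $Q$ forces $r$ to be the only such $\nu$ and to be simple in $P$. This is exactly the assertion for $P$.

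\emph{Step 2: the $K$-positive case, existence.} Let $Q$ be $K$-positive. Fix $\phi_{0}\in\inter K^{*}$; the cross-section $S=\{x\in K:\langle\phi_{0},x\rangle=1\}$ is compact convex, and $x\mapsto Qx/\langle\phi_{0},Qx\rangle$ is a continuous self-map of $S$ (the denominator is positive since $Qx\in\inter K$). Brouwer gives a fixed point $v\in S$ with $Qv=\lambda v$, $\lambda=\langle\phi_{0},Qv\rangle>0$, and $v=Qv/\lambda\in\inter K$. Applying the same to $Q^{T}$, which is $K^{*}$-positive, yields $\psi\in\inter K^{*}$ with $Q^{T}\psi=\lambda'\psi$, $\lambda'>0$; pairing with $v$ and using $\langle\psi,v\rangle>0$ gives $\lambda'=\lambda$. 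Put $B:=Q/\lambda$ and rescale so $\langle\psi,v\rangle=1$; then $Bv=v$, $B^{T}\psi=\psi$, and $\langle\psi,B^{n}x\rangle=\langle\psi,x\rangle$ for every $n$ and every $x$.

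\emph{Step 3: the $K$-positive case, contraction.} Because $B$ maps $K\setminus\{0\}$ into $\inter K$, the image $B(S)$ is a compact subset of $\inter K$ and so has finite diameter in the Hilbert projective metric $d$; Birkhoff's contraction theorem then supplies $q\in[0,1)$ with $d(Bx,By)\le q\,d(x,y)$ on $\inter K$. Since $Bv=v$, $d(B^{n}x,v)\le q^{\,n-1}d(Bx,v)\to 0$ for all $x\in K\setminus\{0\}$, i.e.\ $B^{n}x=c_{n}(x)\,w_{n}(x)$ with $w_{n}(x)\to v$; evaluating $\psi$ gives $c_{n}(x)=\langle\psi,x\rangle/\langle\psi,w_{n}(x)\rangle\to\langle\psi,x\rangle$, whence $B^{n}x\to\langle\psi,x\rangle\,v$ for all $x\in K$ and therefore $B^{n}\to v\psi^{T}$ by linearity. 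The rank-one limit does the rest: an eigenvalue $\mu$ of $B$ with $|\mu|\ge 1$ and $\mu\ne 1$ would give $B^{n}w=\mu^{n}w$, which does not converge while $B^{n}w$ does, so $1=\jsr(B)$ is the unique eigenvalue of modulus $1$; boundedness of $(B^{n})$ forces $1$ to be semisimple; and if $\dim\ker(B-I)\ge 2$, a vector $w$ there independent of $v$ would satisfy $w=B^{n}w\to\langle\psi,w\rangle v\in\re v$, a contradiction. Hence $1$ is a simple eigenvalue of $B$, i.e.\ $\lambda=\jsr(Q)$ is a positive, simple, strictly dominant eigenvalue of $Q$, which is what Step~1 required.

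\emph{Main obstacle.} The analytic heart, and the only place where $K$-positivity (not mere $K$-nonnegativity) is essential, is Step~3: the finiteness of the Hilbert diameter of $B(S)$ and the resulting strict contraction. The remaining ingredients — the Brouwer step, the duality ``$Q$ $K$-positive $\Rightarrow Q^{T}$ $K^{*}$-positive'', and the Jordan-block bookkeeping for $t$-th powers — are standard and need only care, not ideas. One could instead simply quote the classical generalized Perron--Frobenius results from \cite{tam-schneider-core} and dispense with most of Steps~2--3.
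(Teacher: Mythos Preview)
Your argument is correct and follows the classical route to the generalized Perron--Frobenius theorem: Brouwer on a compact cross-section to produce the positive eigenpair, Birkhoff's Hilbert-metric contraction to force strict dominance and simplicity, and then a clean Jordan-block count to lift from $Q=P^{t}$ back to $P$. The only places that deserve a second look are harmless: the compactness of $S$ indeed follows from $\phi_{0}\in\inter K^{*}$, and the duality ``$Q$ $K$-positive $\Rightarrow Q^{T}$ $K^{*}$-positive'' holds because $\langle\phi,Qx\rangle>0$ for every nonzero $\phi\in K^{*}$ and $x\in K$, since $Qx\in\inter K$ and nonzero dual vectors are strictly positive on $\inter K$.

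The comparison with the paper is short: the paper does not prove Theorem~\ref{thm-pf} at all. It is stated as background and attributed to the literature (``see for instance \cite{tam-schneider-core}''), exactly the shortcut you flag at the end of your write-up. So your proposal is not an alternative to the paper's proof but a self-contained substitute for the citation; what it buys is independence from the reference, at the cost of a page of standard Perron--Frobenius machinery that the paper deliberately outsources.
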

The interested reader can find a survey of properties of $K$-nonnegative matrices in \cite{schneidertam}.

Our goal in this short note is to exploit the assumption of $K$-nonnegativity for the study of finitely generated matrix semigroups, and more precisely of \emph{joint spectral characteristics.}  The joint spectral characteristics of a set of matrices are quantities that allow to describe the asymptotic behaviour of the semigroup generated by this set, when the length of the products increases.  In this paper we will restrict our attention to two of these quantities:
\begin{definition}
For a set of matrices $\Sigma \subset \re^{n\times n},$ the \emph{joint spectral radius} $\rho(\Sigma)$ and \emph{joint spectral subradius} $\check \rho(\Sigma)$ are respectively defined as:
\begin{eqnarray}\label{eqn-def-radius}
\jsr (\Sigma)&\triangleq & \lim_{t \to
\infty}\max{\{ \|A_{i_1}\cdots A_{i_t}\|^{1/t}:A_i\in\Sigma\}},\\\label{eqn-def-subradius}
\nonumber \lsr (\Sigma)&\triangleq & \lim_{t \to \infty}\min{\{ \|A_{i_1}\cdots
A_{i_t}\|^{1/t}:A_i\in\Sigma\}}.
\end{eqnarray} \end{definition}
Both these limits exist and do not depend
on the norm chosen. 
They are natural generalizations of the notion of spectral radius of a matrix (i.e., the maximal modulus of the eigenvalues) to a \emph{set} of matrices.  
The
joint spectral radius appeared in~\cite{rota-strang}, and the joint
spectral subradius in~\cite{gu1}.  Not only these quantities account for stability issues of switching linear systems, but they have also found many applications in various different fields of Engineering, Mathematics, and Computer Science.  See~\cite{jungers_lncis} for a recent survey on these quantities.\\
The issue of \emph{computing the joint spectral radius} has also been largely studied, and several negative
results are available in the literature.  For instance, the issue of exactly computing the joint spectral radius, as well as the subradius, are both known to be undecidable.  This fact has led to a rich literature, where techniques from very different fields (from Control Theory to Ergodic Theory, Automata Theory, etc.) have been applied to the topic (see \cite{fainshil-margaliot,morris-ergodic,GZalgorithm,liberzon-cdc10,ajprhscc11,protasov1} as a few examples).\\
The joint spectral subradius has been much less studied until recently. In fact, the only methods we know of that allow to compute the joint spectral subradius have been proposed only very recently \cite{protasov-jungers-blondel09,GP11}.  These methods are proved to be efficient only in some favorable cases, namely, when the set of matrices share a \emph{common invariant embedded pair of cones} (see below for definitions). It appears that this assumption allows for more appealing properties, and in this note we prove (Section \ref{section-continuity}) that under the same hypothesis, the joint spectral subradius is a continuous function.  Then, in Section \ref{section-trace}, we study the joint spectral radius, and we generalize a recent result 
on nonnegative matrices to any set leaving a cone invariant.  This result is about the convergence of the maximum trace, and the maximum spectral radius of the products towards the joint spectral radius, when the length of the products increases.

The joint spectral characteristics have attracted much attention in recent years, especially in the case of nonnegative matrices because many applications involve such matrices \cite{valcher-metzler,jungersprotasovblondel06,gurvits-positive,fainshil-margaliot}.  These applications make the theoretical study of ($K$-)nonnegative matrices of particular importance.

\section{Continuity of the subradius}\label{section-continuity}

The continuity of the joint spectral radius (w.r.t. the Hausdorff distance\footnote{The \emph{Hausdorff distance} measures the distance between sets of points in a metric space:  $$D(\Sigma,\Sigma')\triangleq  \max{\{\sup _{A\in\Sigma}{\{\inf_{A'\in\Sigma'}{||A-A'||}\}},\sup_{A'\in\Sigma'}{\{\inf_{A\in\Sigma}{||A-A'||}\}}\}}$$}) is well-known:

\begin{proposition}\label{prop-continuity}\cite{wirth02generalized}
For any bounded set of matrices $\Sigma\in\ren,$ and for any $\epsilon>0,$ there is a $\delta>0$ such that
$$D(\Sigma,\Sigma')<\delta\Rightarrow |\rho(\Sigma)-\rho(\Sigma')|<\epsilon.$$ \end{proposition}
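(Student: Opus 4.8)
The plan is to prove continuity at an arbitrary bounded set $\Sigma$ by establishing upper and lower semicontinuity of $\rho$ separately. Since $\rho(\Sigma)=\rho(\overline{\Sigma})$ (every product over $\overline{\Sigma}$ is a limit of products over $\Sigma$) and $D(\Sigma,\Sigma')=D(\overline{\Sigma},\overline{\Sigma'})$, I may assume $\Sigma$ and the competitor set $\Sigma'$ to be compact, so that the suprema below are attained. Write $\hat\rho_t(\Sigma)=\sup\{\|A_{i_1}\cdots A_{i_t}\|^{1/t}:A_{i_j}\in\Sigma\}$ and $\bar\rho_t(\Sigma)=\sup\{\rho(A_{i_1}\cdots A_{i_t})^{1/t}:A_{i_j}\in\Sigma\}$, where $\rho(\cdot)$ applied to a single matrix is the usual spectral radius. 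The one elementary estimate used repeatedly is a telescoping bound: if $D(\Sigma,\Sigma')<\delta\le 1$ and $M$ bounds the norms of the matrices of $\Sigma$, then for every length-$t$ product $\Pi$ over $\Sigma$ there is a length-$t$ product $\Pi'$ over $\Sigma'$ with $\|\Pi-\Pi'\|\le t(M+1)^{t-1}\delta$, and symmetrically with the roles of $\Sigma$ and $\Sigma'$ exchanged. (This is the only place where boundedness of $\Sigma$ enters.)

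For upper semicontinuity, recall that $t\mapsto\hat\rho_t(\Sigma)^t$ is submultiplicative, so Fekete's lemma gives $\rho(\Sigma)=\inf_t\hat\rho_t(\Sigma)$. Fix $\epsilon>0$ and pick $t$ with $\hat\rho_t(\Sigma)<\rho(\Sigma)+\epsilon/2$. Combining the telescoping bound with the uniform continuity of $x\mapsto x^{1/t}$ on bounded intervals shows that $\hat\rho_t$ is continuous at $\Sigma$ with respect to $D$; hence for $\delta$ small enough, $D(\Sigma,\Sigma')<\delta$ implies $\hat\rho_t(\Sigma')<\hat\rho_t(\Sigma)+\epsilon/2$. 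Since $\rho(\Sigma')=\inf_s\hat\rho_s(\Sigma')\le\hat\rho_t(\Sigma')$, this yields $\rho(\Sigma')<\rho(\Sigma)+\epsilon$.

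For lower semicontinuity I would invoke the generalized spectral radius theorem (Berger--Wang, Daubechies--Lagarias), which holds for compact sets, in the form $\rho(\Sigma)=\sup_t\bar\rho_t(\Sigma)$. Fix $\epsilon>0$ and pick $t$ and a length-$t$ product $\Pi$ over $\Sigma$ with $\rho(\Pi)^{1/t}>\rho(\Sigma)-\epsilon/2$. Since the spectral radius of a matrix is continuous, there is $\eta>0$ with $\rho(N)^{1/t}>\rho(\Pi)^{1/t}-\epsilon/2$ whenever $\|N-\Pi\|<\eta$; choose $\delta>0$ so small that $D(\Sigma,\Sigma')<\delta$ produces, via the telescoping bound, a length-$t$ product $\Pi'$ over $\Sigma'$ with $\|\Pi'-\Pi\|<\eta$. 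Feeding the periodic sequence $\Pi',\Pi',\dots$ into the limit defining $\rho(\Sigma')$ (so that $\hat\rho_{kt}(\Sigma')\ge\|(\Pi')^{k}\|^{1/(kt)}\to\rho(\Pi')^{1/t}$) gives $\rho(\Sigma')\ge\rho(\Pi')^{1/t}>\rho(\Sigma)-\epsilon$. Combined with the previous paragraph, $|\rho(\Sigma)-\rho(\Sigma')|<\epsilon$.

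I expect the lower-semicontinuity step to be the real obstacle: it relies on the generalized spectral radius theorem, a substantial result, and there seems to be no way around it, since the only perturbation-stable quantity that can witness a large value of $\rho(\Sigma)$ is the spectral radius of a finite product, and it is precisely the Berger--Wang identity that guarantees such witnesses approach $\rho(\Sigma)$. The upper bound, in contrast, already follows from the elementary submultiplicative estimates, and the remaining ingredients (Fekete's lemma, the telescoping inequality, continuity of the matrix spectral radius) are routine.
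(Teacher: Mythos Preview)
Your argument is correct. The reduction to compact sets, the telescoping perturbation estimate, the upper-semicontinuity via Fekete/submultiplicativity, and the lower-semicontinuity via the Berger--Wang identity together with continuity of the single-matrix spectral radius are all sound; in particular your derivation of $\rho(\Sigma')\ge\rho(\Pi')^{1/t}$ from the periodic sequence is the clean way to avoid re-invoking Berger--Wang on the perturbed set.

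As for comparison with the paper: there is nothing to compare. The paper does not prove this proposition at all; it is quoted as a known fact with a citation to Wirth (\emph{The generalized spectral radius and extremal norms}, Linear Algebra Appl.\ 342 (2000), 17--40). So your write-up supplies an argument where the paper merely gives a reference. Your self-diagnosis is also accurate: the upper bound is elementary, while the lower bound genuinely requires the Berger--Wang identity (which the paper records later as Theorem~\ref{thm-fond-jsr}); that dependence is unavoidable for this style of proof, since a norm-based quantity like $\hat\rho_t$ only controls $\rho$ from above, and one needs a perturbation-stable lower witness for $\rho(\Sigma)$, which is exactly what a finite product of near-maximal spectral radius provides.

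One cosmetic remark: the phrase ``uniform continuity of $x\mapsto x^{1/t}$ on bounded intervals'' is slightly more than you need (and fails near $0$ for $t>1$); pointwise continuity of $x\mapsto x^{1/t}$ at the fixed value $\hat\rho_t(\Sigma)^t$ already suffices, since $t$ is fixed before $\delta$ is chosen.
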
 

Less is known on the subradius.  This quantity is not continuous, as shown on the next example, drawn from \cite{jungers_lncis}:

\begin{ex}
The sequence of sets $$ \Sigma_k = \left\{ \begin{pmatrix}1&1\\0&1\end{pmatrix},\begin{pmatrix}0&0\\-\frac{1}{k}&1\end{pmatrix} \right\}, \quad k\in \n,$$ converges towards $$ \Sigma = \left\{ \begin{pmatrix}1&1\\0&1\end{pmatrix},\begin{pmatrix}0&0\\0&1\end{pmatrix} \right\}$$
when $k \rightarrow \infty.$ For any $k\in \n,$ we have $\check \rho (\Sigma_k)=0$ because the product $(A_1A_0^k)^2$ is the zero matrix.  However $\check\rho(\Sigma)=1$ (This is because the lower right entry of any product is equal to one) and thus, the joint spectral subradius is not continuous in the neighborhood of $\Sigma.$
\end{ex}
Thus, the nonnegativity of matrices is not sufficient for ensuring the continuity of the subradius in the neighborhood of the set.  However, we show in this section that if the matrices share a second invariant cone, then we have the continuity.
More precisely, let us consider a cone $K   \subset
\re^n.$ We say that a convex closed cone $K'$ is {\em embedded in} $K$ if
$(K'\setminus \{0\}) \subset  \inter K.$ In this case, following \cite{protasov-jungers-blondel09}, we call $\{K, K'\}$ an
{\em embedded pair}. Note that  the embedded
cone $K'$ may be degenerate, i.e., may have an empty interior. An embedded pair $\{K,K'  \}$ is called
an {\em invariant pair} for a matrix
$A$ (or a set of matrices $\Sigma$) if the cones $K$ and  $K'$ are
both invariant for $A$ (for the matrices in $\Sigma$). The following definition aims at characterizing the ``embeddedness'' of the pair $(K,K').$
\begin{definition}\label{def-embedded}\cite{protasov-jungers-blondel09}
For a given embedded pair $\{K, K'\}$ the value $\beta (K, K')$ is the smallest number
such that for any line intersecting $K$ and $K'$ by segments $ [x,y] $ and $ [x', y'] $ respectively
(with $ [x,x']\subset [x, y'] $) one has
$1\leq \frac{|x-y'|}{|x-x'|} \le  \beta .$
\end{definition}
In the following we denote by $\Sigma^t$ the set of products of length $t$ of matrices in $\Sigma.$
In the developments below we use the two following results:

\begin{lemma} \cite{jungers-protasov-blondel07}\label{lem-conic}
Let $\Sigma$ be a compact set of matrices that share an invariant cone $K.$  If there exists a nonzero vector $x\in K$ such that $$ \forall A\in \Sigma, Ax\geq_K r x,  $$ then $\check \rho (\Sigma)\geq r.$
\end{lemma}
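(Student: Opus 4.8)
The plan is to exploit that every $A\in\Sigma$ is monotone for the partial order $\leq_K$ induced by the invariant cone, to propagate the inequality $Ax\geq_K rx$ along an arbitrary product, and then to extract a lower bound on the norm of that product by pairing it with a strictly positive dual functional.

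First I would note that, since $AK\subset K$ for every $A\in\Sigma$, each $A$ is order-preserving: $u-v\in K$ implies $Au-Av=A(u-v)\in K$. I may also assume $r\geq 0$, since $\check\rho(\Sigma)\geq 0$ always holds (norms are nonnegative), so the statement is vacuous when $r<0$. Then, starting from $A_{i_t}x\geq_K r x$ and applying $A_{i_{t-1}},\dots,A_{i_1}$ in turn, using monotonicity together with $r\geq 0$ at each step, I obtain for any product $P=A_{i_1}\cdots A_{i_t}$ with the $A_{i_j}\in\Sigma$ that $Px\geq_K r^t x$, i.e. $Px-r^t x\in K$.

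Next I would choose a linear functional $\phi$ in the dual cone $K^*$ with $\phi(x)>0$; such a $\phi$ exists because $x\in K\setminus\{0\}$ and $K$ is proper (any functional in the nonempty interior of $K^*$ is strictly positive on $K\setminus\{0\}$). Applying $\phi$ to $Px-r^t x\in K$ gives $\phi(Px)\geq r^t\phi(x)$, while $\phi(Px)\leq\|\phi\|\,\|P\|\,\|x\|$; hence $\|P\|\geq c\,r^t$ with $c=\phi(x)/(\|\phi\|\,\|x\|)>0$ a constant independent of $t$ and of the particular product. Since this bound is uniform over all products of length $t$, it follows that $\min\{\|A_{i_1}\cdots A_{i_t}\|^{1/t}:A_i\in\Sigma\}\geq c^{1/t}r$, and letting $t\to\infty$ yields $\check\rho(\Sigma)\geq r$.

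The argument is essentially elementary once the functional $\phi$ is in hand; the one point that needs a little care is precisely the existence of $\phi\in K^*$ with $\phi(x)>0$ for an arbitrary nonzero $x\in K$, which is where properness of $K$ (pointedness, hence solidity of $K^*$) genuinely enters. Compactness of $\Sigma$ plays no role in the lower bound itself; it only guarantees that the minimum in the definition of $\check\rho$ is attained.
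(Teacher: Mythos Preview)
Your argument is correct. The paper does not supply its own proof of this lemma; it is quoted from \cite{jungers-protasov-blondel07} and used as a black box in the proof of Theorem~\ref{main-thm}. The approach you give---propagating the conic inequality along products via the monotonicity of $K$-nonnegative maps, then testing against a functional $\phi\in\inter K^*$ to convert the cone inequality into a uniform norm lower bound---is the standard elementary proof, and nothing more is needed here. Your remarks on the role of $r\geq 0$ (needed to multiply the order inequality by $r$ at each inductive step) and on where properness of $K$ enters (so that $K^*$ is solid and a strictly positive $\phi$ exists) are exactly right.
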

\begin{theorem}\label{th-beta}\cite[Theorem 2.12]{protasov-jungers-blondel09}
For any compact set $\Sigma$ with an invariant pair  $\{K, K'\},$ there exists a nonzero vector $x\in K'$ such that $$ \forall A\in \Sigma, Ax\geq_K \check{\rho}(\Sigma) x/\beta,  $$where $\beta=\beta(\{K, K'\}).$
\end{theorem}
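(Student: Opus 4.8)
The plan is to reduce the theorem to a finite-horizon estimate supplemented by a purely geometric fact about the embedded pair. By homogeneity we may rescale $\Sigma$ so that $\check\rho(\Sigma)=1$; the case $\check\rho(\Sigma)=0$ is immediate (any nonzero $x\in K'$ works, since $Ax\in K$ for all $A$), and we assume $K'\neq\{0\}$, as otherwise the statement is vacuous. Fix once and for all a vector $v_0\in K'\setminus\{0\}$, which automatically lies in $\inter K$. I will show that for every $\theta<1/\beta$ there is a unit vector $x_\theta\in K'$ with $Ax_\theta\geq_K\theta x_\theta$ for all $A\in\Sigma$; then, choosing $\theta_j\uparrow1/\beta$, extracting a limit $x^{*}$ of the $x_{\theta_j}$ in the compact set $\{v\in K':\|v\|=1\}$, and passing to the limit in the closed conditions $Ax_{\theta_j}-\theta_j x_{\theta_j}\in K$, one obtains $x^{*}\in K'\setminus\{0\}$ with $Ax^{*}\geq_K x^{*}/\beta$ for all $A\in\Sigma$, which after undoing the rescaling is the claim. (Note this is, in quantitative form, a converse of Lemma~\ref{lem-conic}.)

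Two inputs are needed. The first is a standard interior-vector description of the subradius: since $v_0\in\inter K$ dominates a fixed multiple of every unit vector of $K$, order-preservation by $K$-nonnegative matrices together with normality of the proper cone $K$ (i.e. $0\leq_K a\leq_K b$ implies $\|a\|\leq C_K\|b\|$) gives $\|P\|\leq C\,\|Pv_0\|$ for a constant $C=C(K,v_0)$ and every $P\in\Sigma^{k}$; combined with $\|Pv_0\|\leq\|P\|\,\|v_0\|$ this yields $\check\rho(\Sigma)=\lim_{k\to\infty}\big(\min_{P\in\Sigma^{k}}\|Pv_0\|\big)^{1/k}$, so that, since $\check\rho(\Sigma)=1$, for each $\epsilon>0$ there is $k_0$ with $\|Pv_0\|>(1-\epsilon)^{k}$ for all $P\in\Sigma^{k}$, $k\geq k_0$. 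The second is the geometric lemma: for all nonzero $u,w\in K'$ one has $w\geq_K\frac{\|w\|}{\beta\,\|u\|}\,u$, equivalently $\beta\|u\|\,w-\|w\|\,u\in K$. To prove it, restrict to the plane $\Pi=\mathrm{span}\{u,w\}$ (the case $u\parallel w$ being trivial since $\beta\geq1$): the value $\max\{r:w-ru\in K\}$ does not change if $K$ is replaced by $K\cap\Pi$, the pair $\{K\cap\Pi,K'\cap\Pi\}$ is again an embedded pair, and its $\beta$-parameter is at most $\beta(K,K')$ because the supremum in Definition~\ref{def-embedded} is then over a smaller family of lines; in the plane the inequality is elementary, since the ratio in Definition~\ref{def-embedded} is an affine invariant, so one may take $K\cap\Pi=\re^{2}_{+}$, in which case the $\beta$-parameter equals the ratio of the extreme slopes of $K'\cap\Pi$ and $\max\{r:w-ru\in K\}=\min_i w_i/u_i$ is bounded below as required by a direct computation.

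Granting these, fix $\theta<1/\beta$ and suppose for contradiction that no unit $x\in K'$ satisfies $Ax\geq_K\theta x$ for all $A\in\Sigma$; by homogeneity, for every $w\in K'\setminus\{0\}$ there is then $A_w\in\Sigma$ with $A_ww\not\geq_K\theta w$. Form $w_0=v_0$ and $w_{i+1}=A_{w_i}w_i$: each $w_i$ lies in $K'$ (the matrices leave $K'$ invariant), and we may assume $w_i\neq0$ for all $i$, since if $w_{i+1}=0$ then the product $A_{w_i}\cdots A_{w_0}$, padded on the left by powers of some fixed matrix of $\Sigma$, kills $v_0$, giving $\min_{P\in\Sigma^{k}}\|Pv_0\|=0$ for all large $k$ and contradicting $\check\rho(\Sigma)=1$. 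Applying the geometric lemma with $u=w_i,\ w=w_{i+1}$, the relation $w_{i+1}\not\geq_K\theta w_i$ forces $\|w_{i+1}\|/(\beta\|w_i\|)<\theta$, i.e. $\|w_{i+1}\|<\beta\theta\,\|w_i\|$. Telescoping, $P_k:=A_{w_{k-1}}\cdots A_{w_0}\in\Sigma^{k}$ satisfies $\|P_kv_0\|=\|w_k\|<(\beta\theta)^{k}\|v_0\|$; since $\beta\theta<1$, choosing $\epsilon$ with $1-\epsilon>\beta\theta$ contradicts $\|P_kv_0\|>(1-\epsilon)^{k}$ for large $k$. This establishes the existence of $x_\theta$ for every $\theta<1/\beta$, and hence the theorem by the limiting argument of the first paragraph.

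I expect the main obstacle to be the geometric lemma --- extracting from the ``line-by-line'' Definition~\ref{def-embedded} a single uniform $\leq_K$-domination valid between arbitrary elements of $K'$ --- together with making precise that this is exactly where genuine embeddedness enters. Indeed, the discontinuity example of Section~\ref{section-continuity} shows that, working with the cone $K$ alone, the iterates $w_i$ could decay in norm while remaining ``large'' in the $K$-order along $\partial K$; it is precisely the finiteness of $\beta$ that rules this out and converts the norm decay of $\{w_i\}$ into the contradiction above.
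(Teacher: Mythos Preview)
The paper does not prove this statement: Theorem~\ref{th-beta} is quoted from \cite[Theorem~2.12]{protasov-jungers-blondel09} and used as a black box in the proof of Theorem~\ref{main-thm}, so there is nothing in the present paper to compare your argument against.

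Your strategy is the natural one, and its norm-free core is correct --- what your ``geometric lemma'' really encodes is the Hilbert-metric bound $s^{*}(w,u)/r^{*}(w,u)\le\beta$ for all nonzero $u,w\in K'$, where $r^{*}=\max\{r:w\ge_{K}ru\}$ and $s^{*}=\min\{s:su\ge_{K}w\}$. There is, however, a genuine (if easily repaired) gap: as written, the inequality $w\ge_{K}\tfrac{\|w\|}{\beta\|u\|}\,u$ is \emph{false} for an arbitrary norm. With $K=\re^{2}_{+}$, $K'=\mathrm{cone}\{(1,1),(1,2)\}$ (so $\beta=2$), $u=(1,1)$, $w=(1,2)$ and the norm $\|(a,b)\|=|a-b|+|b|$, one finds $\|w\|/(\beta\|u\|)=3/2$ but $r^{*}(w,u)=1$. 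Your planar reduction is fine, but the linear map sending $K\cap\Pi$ to $\re^{2}_{+}$ does not preserve a generic norm, so the ``direct computation'' step does not close. The fix is to work throughout with a $K$-\emph{monotone} norm, for instance the order-unit norm $\|x\|_{e}=\inf\{t>0:-te\le_{K}x\le_{K}te\}$ for a fixed $e\in\inter K$: monotonicity yields $r^{*}\|u\|\le\|w\|\le s^{*}\|u\|$, hence $r^{*}\ge\|w\|/(\beta\|u\|)$ follows at once from $s^{*}/r^{*}\le\beta$, and both your subradius characterisation via $\|Pv_{0}\|$ and the telescoping contradiction then run with this same norm and deliver the sharp constant $1/\beta$.
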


We are now in position to prove the main theorem of this section:
\begin{theorem}\label{main-thm}
Let $\Sigma$ be a compact set of matrices in $\mathbb{R}^{n\times n},$ and let $\Sigma_{k}$ be a sequence of sets in $\mathbb{R}^{n\times n}$ that converges to $\Sigma$ in the Hausdorff metric.  If $\Sigma$ leaves an embedded pair of cones invariant, then, $$ \check \rho(\Sigma_k)\ \rightarrow  \check \rho(\Sigma) \quad \mbox{ as $k \rightarrow \infty$}. $$
\end{theorem}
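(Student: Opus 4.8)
The plan is to prove continuity by establishing the two inequalities $\limsup_k \check\rho(\Sigma_k) \le \check\rho(\Sigma)$ and $\liminf_k \check\rho(\Sigma_k) \ge \check\rho(\Sigma)$ separately, using the invariant pair only for the second (harder) direction. For the upper semicontinuity, I would argue directly from the definition: pick for each $t$ a product of length $t$ in $\Sigma^t$ of minimal norm, perturb each factor by at most $D(\Sigma,\Sigma_k)$, and use submultiplicativity of the norm together with a uniform bound $M$ on $\|A\|$ over $\Sigma$ and all $\Sigma_k$ (valid since the $\Sigma_k$ converge, hence are eventually uniformly bounded). This gives, for a fixed $t$, $\check\rho(\Sigma_k) \le \big(\min_{\Sigma^t}\|\cdot\| + t\,\delta_k(M+1)^{t-1}\big)^{1/t}$ roughly, where $\delta_k = D(\Sigma,\Sigma_k)\to 0$; letting $k\to\infty$ first and then $t\to\infty$ yields $\limsup_k\check\rho(\Sigma_k) \le \check\rho(\Sigma)$. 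Actually this direction does not even need the cone hypothesis.

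\emph{Upper bound, more carefully.} For any fixed $t$, choose $A_{i_1}\cdots A_{i_t}$ attaining the minimum in the definition of $\check\rho(\Sigma)$ at level $t$, and choose $B_{i_j}\in\Sigma_k$ with $\|A_{i_j}-B_{i_j}\| \le \delta_k$. A telescoping estimate bounds $\|B_{i_1}\cdots B_{i_t} - A_{i_1}\cdots A_{i_t}\|$ by $t\delta_k(M+\delta_k)^{t-1}$. Since $\check\rho(\Sigma_k) \le \|B_{i_1}\cdots B_{i_t}\|^{1/t}$ for any such product, we get $\check\rho(\Sigma_k) \le \big(\|A_{i_1}\cdots A_{i_t}\| + t\delta_k(M+\delta_k)^{t-1}\big)^{1/t}$. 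Fix $\epsilon>0$; choose $t$ so that $\|A_{i_1}\cdots A_{i_t}\|^{1/t} < \check\rho(\Sigma)+\epsilon/2$, then choose $k$ large so the correction term is negligible. This shows $\check\rho(\Sigma_k) < \check\rho(\Sigma) + \epsilon$ for $k$ large.

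\emph{Lower bound — the main obstacle.} Here is where the invariant pair enters. By Theorem~\ref{th-beta}, there is a nonzero $x\in K'$ with $Ax \ge_K (\check\rho(\Sigma)/\beta)\,x$ for every $A\in\Sigma$. Since $x\in K'\setminus\{0\} \subset \inter K$, the vector $x$ lies strictly inside $K$, so a small perturbation of each matrix still maps $x$ into $K$ in a controlled way: concretely, for $B\in\Sigma_k$ with $\|B-A\|\le\delta_k$, we have $Bx = Ax + (B-A)x \ge_K (\check\rho(\Sigma)/\beta)x - \delta_k\|x\|\,v$ for a suitable $v$, and because $x$ is an interior point of $K$ one can absorb the error: there is $c>0$ (depending only on $K$ and $x$) with $Bx \ge_K (\check\rho(\Sigma)/\beta - c\,\delta_k)\,x$ for all $B\in\Sigma_k$ obtained as such perturbations — equivalently, for all $B\in\Sigma_k$ once $\delta_k$ is small, since then $\Sigma_k$ consists exactly of such perturbations. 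The interiority point is the crux: one uses that for $x\in\inter K$ there exists $c>0$ such that $x - (w/(c\|w\|)) \in K$ for every $w\ne 0$, i.e. a ball of radius $1/c$ around $x$ (in the direction-normalized sense) sits inside $K$; this is precisely what fails in the scalar/nonnegative example from the text, where the relevant vector lies on the boundary. Then Lemma~\ref{lem-conic}, applied with $r = \check\rho(\Sigma)/\beta - c\delta_k$ (and noting $\Sigma_k$ shares the invariant cone $K$ — or rather, $K$ deformed; see the remark below), would give $\check\rho(\Sigma_k) \ge \check\rho(\Sigma)/\beta - c\delta_k$. Taking $k\to\infty$ yields $\liminf_k\check\rho(\Sigma_k) \ge \check\rho(\Sigma)/\beta$, which is \emph{not} quite enough.

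To close the gap one must iterate: apply the $\beta$-bound not to $\Sigma$ but to a large power $\Sigma^T$, whose invariant pair has $\beta(\Sigma^T)\to 1$ — no, $\beta$ is a property of the cone pair, not of $\Sigma$, so that does not shrink. The correct fix is that the $1/\beta$ loss is spurious and can be removed by applying the argument to $\Sigma^T$ directly: Theorem~\ref{th-beta} gives $x$ with $Ax\ge_K (\check\rho(\Sigma)/\beta)x$, but iterating $A_{i_1}\cdots A_{i_T}x \ge_K (\check\rho(\Sigma)/\beta)^{?}$ — careful, one does not regain powers of $1/\beta$ that way either. The genuine resolution: note $\Sigma_k$ itself leaves an embedded pair invariant for $k$ large if $K'$ is chosen with some room — more robustly, one replaces $K$ by the slightly larger cone $K_\eta = \{y : \mathrm{dist}(y,K)\le \eta\|y\|\}$, which is invariant for $\Sigma_k$ when $\eta$ dominates $\delta_k$ times the operator norms, and $\{K_\eta, K'\}$ is still an embedded pair with $\beta(\{K_\eta,K'\})\to\beta(\{K,K'\})$; applying Theorem~\ref{th-beta} and Lemma~\ref{lem-conic} to $\Sigma_k$ with \emph{its own} cone pair gives $\check\rho(\Sigma_k)\ge \check\rho(\Sigma_k)/\beta(\{K_\eta,K'\})$, a tautology. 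I therefore expect the actual argument to run the other way: prove $\check\rho(\Sigma_k)$ is bounded below using Lemma~\ref{lem-conic} with the eigenvector $x$ of $\Sigma$ perturbed into $K'$, and handle the $\beta$ factor by exploiting that one may take $T\to\infty$ powers \emph{before} perturbing, so that the per-step error $\delta_k$ is amortized. Making this last maneuver precise — extracting the exact constant $\check\rho(\Sigma)$ rather than $\check\rho(\Sigma)/\beta$ in the limit, uniformly as $k\to\infty$ — is the step I expect to be delicate, and is presumably where the author's proof does its real work.
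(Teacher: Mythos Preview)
Your upper-semicontinuity argument is fine and matches the paper's (the paper simply cites the known fact). The lower bound is where you have a genuine gap, and you correctly located it: you obtain only $\liminf_k \check\rho(\Sigma_k) \ge \check\rho(\Sigma)/\beta$ and then fail to remove the factor $1/\beta$.

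The fix is exactly the move you considered and then discarded too hastily: apply Theorem~\ref{th-beta} to $\Sigma^t$ rather than to $\Sigma$. You are right that $\beta = \beta(K,K')$ depends only on the cone pair and does not shrink with $t$. But that is not what matters. Since $\check\rho(\Sigma^t) = \check\rho(\Sigma)^t$, Theorem~\ref{th-beta} applied to $\Sigma^t$ produces $x\in K'$ with
\[
A x \;\ge_K\; \frac{\check\rho(\Sigma)^t}{\beta}\,x \qquad \text{for every } A\in\Sigma^t .
\]
Now choose $t$ large enough that $\check\rho(\Sigma)^t/\beta \ge (\check\rho(\Sigma)-\epsilon/2)^t$; this is possible precisely because $\beta^{1/t}\to 1$. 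With $t$ fixed, perturb at the level of length-$t$ products: for $\Sigma'$ Hausdorff-close to $\Sigma$, every $A'\in\Sigma'^{\,t}$ is uniformly close to some $A\in\Sigma^t$, and since $x\in K'\subset\inter K$ a small additive error $(A'-A)x$ can be absorbed by sacrificing another $\epsilon/2$ in the coefficient, yielding $A'x \ge_K (\check\rho(\Sigma)-\epsilon)^t x$ for all $A'\in\Sigma'^{\,t}$. Lemma~\ref{lem-conic} (applied to $\Sigma'^{\,t}$) then gives $\check\rho(\Sigma')^t = \check\rho(\Sigma'^{\,t}) \ge (\check\rho(\Sigma)-\epsilon)^t$. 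This is exactly the paper's argument.

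So the missing idea is not a new cone or a deformation of $K$: it is simply that the single multiplicative loss $1/\beta$ becomes a loss of $\beta^{-1/t}$ after taking $t$-th roots, and vanishes as $t\to\infty$. Fix $t$ first to kill $\beta$, \emph{then} choose $\delta'$ small to control the perturbation of length-$t$ products.
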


\begin{proof}

  Let us consider a set $\Sigma$ that leaves an embedded pair of cones $(K,K')$ invariant, such that $\beta(K,K')=\beta.$  If $\check\rho(\Sigma)=0$ then the joint spectral subradius is continuous at $\Sigma,$ being upper semicontinuous and nonnegative in general \cite{jungers_lncis}.  We can then suppose that $\check \rho (\Sigma)>0$ and, by scaling the set of matrices, we suppose that  $\check\rho(\Sigma)=2$ (the joint spectral subradius is an homogeneous function of the entries of the matrices).

Fix $\epsilon>0.$  By upper semicontinuity, we know that there exists a $\delta >0 $ such that $$D(\Sigma',\Sigma)<\delta\quad \Rightarrow \quad \check \rho (\Sigma')<\check\rho (\Sigma)+\epsilon.$$ We still have to show that for some $\delta'>0,$ if $D(\Sigma',\Sigma)<\delta',$ then $$\check \rho (\Sigma')>\check\rho (\Sigma)-\epsilon.$$   
It follows directly from the Definition (\ref{eqn-def-subradius}) that $$\check\rho(\Sigma^t)=\check\rho^t(\Sigma). $$ Combining this with Theorem \ref{th-beta}, there must exist an integer $t$ and a vector $x\in K',$ $|x|=1,$
 such that $$\forall A\in \Sigma^t,\quad Ax\geq_{K'}(2-\epsilon/2)^tx.$$ Take also $t$ large enough such
 that $(2-\epsilon/2)^t\geq (2-\epsilon)^t+\epsilon/2.$  Since $x\in K'\subset \inter(K),$
  we can define $\eta>0$ such that for any vector $y:$ $$|y|<\eta,\quad \Rightarrow \quad (\epsilon/2)x+y \in K.$$
 Then, $\Sigma$ being compact, there exists a $\delta' >0$ such that $$ D(\Sigma',\Sigma)<\delta' \quad \Rightarrow D(\Sigma'^t,\Sigma^t)<\eta.$$
Take such a set of matrices $\Sigma',$ so that $D(\Sigma'^t,\Sigma^t)<\eta.$    Then, for any product $A'\in \Sigma'^t$, taking $A$ the corresponding product in $\Sigma^t$ close to $A',$ 
\begin{eqnarray}\nonumber A'x &=& (A'-A+A)x\\\nonumber &\geq_{K}&(2-\epsilon/2)^tx +(A'-A)x,\\\nonumber &\geq_{K}&(2-\epsilon)^tx +(A'-A)x+(\epsilon/2)x\\\nonumber  &\geq_K &(2-\epsilon)^tx . \end{eqnarray}

By Lemma \ref{lem-conic}, this implies that $\check \rho (\Sigma')\geq 2-\epsilon.$
\end{proof}

\begin{cor}
The joint spectral subradius is continuous in the neighborhood of any set of matrices with positive entries.
\end{cor}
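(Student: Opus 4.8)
The goal is to derive the corollary — continuity of the joint spectral subradius near sets of matrices with strictly positive entries — from the main theorem. The plan is to show that a set $\Sigma$ of strictly positive matrices automatically leaves an embedded pair of cones invariant, so that Theorem \ref{main-thm} applies directly.

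\medskip

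First I would take $K = \ren_+$, the standard nonnegative orthant, which is a proper cone and is invariant under every nonnegative matrix, in particular under every matrix of $\Sigma$. The point is to produce the embedded partner $K'$. For a strictly positive matrix $A$, the image $A\ren_+$ lies, apart from the origin, in the interior $\inter \ren_+ = \ren_{++}$; since $\Sigma$ is (we may assume) finite, or at least compact, the images of the unit simplex under the matrices of $\Sigma$ stay in a compact subset of $\ren_{++}$, so there is a single smaller cone that contains all of them. Concretely, I would let $K'$ be the conic hull of $\bigcup_{A\in\Sigma} A(\ren_+)$ (equivalently, the conic hull of $\bigcup_{A\in\Sigma} AK$), closed up; because each $AK\setminus\{0\}\subset \inter K$ and there are finitely many (or compactly many) such images, $K'\setminus\{0\}\subset \inter K$, so $\{K,K'\}$ is an embedded pair in the sense of the paper. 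It is proper as a closed convex cone contained in a proper cone; it need not have nonempty interior, but the paper explicitly allows the embedded cone $K'$ to be degenerate. By construction $AK\subset K'\subset K$ for every $A\in\Sigma$, hence in particular $AK'\subset AK\subset K'$, so both $K$ and $K'$ are invariant for every matrix in $\Sigma$: $\{K,K'\}$ is an invariant pair for $\Sigma$.

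\medskip

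With that in hand, Theorem \ref{main-thm} gives exactly the claim: if $\Sigma_k \to \Sigma$ in the Hausdorff metric, then $\check\rho(\Sigma_k)\to\check\rho(\Sigma)$, which is the statement that $\check\rho$ is continuous at $\Sigma$. Since $\Sigma$ was an arbitrary (bounded) set with positive entries, $\check\rho$ is continuous in a neighborhood of every such set. One small point to be careful about: Theorem \ref{main-thm} as stated presumes $\Sigma$ compact, so I would either restrict attention to compact (e.g. finite) sets of positive matrices, as is standard in this literature, or note that $D(\Sigma_k,\Sigma)\to 0$ forces the relevant boundedness and one may pass to closures without changing $\check\rho$.

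\medskip

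I expect the only real content — hence the only possible obstacle — to be the verification that $K'$ as defined is genuinely \emph{embedded}, i.e.\ that $K'\setminus\{0\}\subset\inter K$, and that this uses strict positivity in an essential way together with compactness of $\Sigma$. If $\Sigma$ contained a merely nonnegative (not strictly positive) matrix, the image of the orthant could touch the boundary of $\ren_+$ and the embedded-pair construction would fail; this is precisely why the earlier example with a nonnegative but non-positive second matrix shows discontinuity. So the heart of the argument is: positive entries $\Rightarrow$ images of the orthant are uniformly interior $\Rightarrow$ embedded pair exists $\Rightarrow$ Theorem \ref{main-thm} applies.
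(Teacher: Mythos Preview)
Your approach is correct and follows the same route as the paper: verify that a set of strictly positive matrices admits an invariant embedded pair $(K,K')$ with $K=\ren_+$, and then invoke Theorem~\ref{main-thm}. The only difference is that the paper does not construct $K'$ by hand; it simply cites \cite[Corollary~2.14]{protasov-jungers-blondel09}, which states that if every column ratio in every matrix is bounded by $c$, then an invariant embedded pair exists with $\beta\le c^{2}$. Your explicit construction of $K'$ as the closed conic hull of $\bigcup_{A\in\Sigma}AK$ is a perfectly valid self-contained substitute; the cited result additionally gives a quantitative bound on $\beta$, but that plays no role in the continuity statement.
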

\begin{proof}
It is known (see \cite[Corollary 2.14]{protasov-jungers-blondel09}) that if all matrices of a set $\Sigma$ are positive and in each column of any matrix the ratio between the
greatest and the smallest elements
  does not exceed $c,$ then $\Sigma$ has an invariant pair of embedded cones, for which $ \beta \le  c^{ 2}.$
\end{proof}

\section{Asymptotic regularity}\label{section-trace}

It is well known that for any bounded set of matrices, the joint spectral radius can be alternatively defined in terms of the maximal spectral radius (instead of the maximal norm):
\begin{theorem}(\textbf{Joint Spectral Radius Theorem}, \cite{berger-wang})\label{thm-fond-jsr}\\
For any bounded set of matrices $\Sigma,$
$$\limsup_{t\rightarrow \infty}{\sup{\{\rho^{1/t}(A):A\in\Sigma^t\}}}=\rho(\Sigma). $$\end{theorem}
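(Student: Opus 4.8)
Write $\bar\rho(\Sigma)$ for the left-hand side. I would prove the two inequalities separately. The inequality $\bar\rho(\Sigma)\le\rho(\Sigma)$ is routine: for any submultiplicative norm $\rho(A)\le\|A\|$, so $\sup\{\rho^{1/t}(A):A\in\Sigma^t\}\le\sup\{\|A\|^{1/t}:A\in\Sigma^t\}$, and the right-hand sequence converges to $\rho(\Sigma)$ by Fekete's lemma applied to the subadditive sequence $t\mapsto\log\sup\{\|A\|:A\in\Sigma^t\}$; now take the $\limsup$. I also record, for later use, that $\rho(A^k)=\rho(A)^k$ forces $\sup\{\rho^{1/(kt)}(A):A\in\Sigma^{kt}\}\ge\sup\{\rho^{1/t}(A):A\in\Sigma^t\}$ for every $k$, so that the $\limsup$ in the statement in fact equals $\sup_{t\ge1}\sup\{\rho^{1/t}(A):A\in\Sigma^t\}$.

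For the reverse inequality I may assume, using the easy half, that $\rho(\Sigma)>0$, hence after scaling that $\rho(\Sigma)=1$; replacing $\Sigma$ by its closure (which changes neither side) I may also assume $\Sigma$ compact. The goal becomes $\bar\rho(\Sigma)\ge1$. First I reduce to the \emph{irreducible} case: if the matrices of $\Sigma$ have a proper nontrivial common invariant subspace, put them in common block-triangular form along a maximal invariant flag; then $\rho(\Sigma)$ is the largest of the joint spectral radii of the diagonal block-sets (a standard fact about block-triangular sets), $\rho(A)$ is the largest of the spectral radii of the diagonal blocks of $A$, and hence $\bar\rho(\Sigma)$ is the largest of the $\bar\rho$ of those (strictly lower-dimensional) block-sets, so we may reduce to the irreducible case by induction on the dimension. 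Assume then $\Sigma$ compact and irreducible, $\rho(\Sigma)=1$. Now I invoke the existence of an \emph{extremal norm} $\|\cdot\|_*$, that is $\|Ax\|_*\le\|x\|_*$ for all $A\in\Sigma$ and all $x$ (this is where irreducibility is used: normalized products of an irreducible set are bounded). Then $\|P\|_*\le1$ for every product $P$ in the semigroup $S=\bigcup_{t\ge1}\Sigma^t$, so $S$ is bounded and $\overline S$ is a \emph{compact} semigroup. Moreover, since $\rho(\Sigma)=1$ exactly, $\sup\{\|A\|_*:A\in\Sigma^t\}=1$ for every $t$, attained at some $A_t=B_1\cdots B_t\in\Sigma^t$ with $\|A_t u\|_*=\|u\|_*=1$ for a suitable unit vector $u$; then, by extremality, the entire trajectory $v_0=u$, $v_k=B_{t-k+1}\cdots B_t\,u$ ($k=0,\dots,t$) consists of $\|\cdot\|_*$-unit vectors.

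Now suppose, for contradiction, that $\bar\rho(\Sigma)<1$. By the reformulation above there is $\mu<1$ with $\rho(A)\le\mu^t$ for every $t$ and every $A\in\Sigma^t$. The key point: the set $K=\{Q\in\overline S:\rho(Q)\le\mu\}$ is compact, and on $K$ the map $Q\mapsto\|(I-Q)^{-1}\|_*$ is well defined (no such $Q$ has $1$ as an eigenvalue) and continuous, hence bounded by some finite $C$; crucially $C$ bounds $\|(I-Q)^{-1}\|_*$ for \emph{every} product $Q$ with $\rho(Q)\le\mu$, whatever its length, because $\overline S$ is globally compact. Fix $\delta<1/C$ and take $t$ larger than the cardinality of a $\delta$-net of the $\|\cdot\|_*$-unit sphere. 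Pigeonholing among $v_0,\dots,v_t$ gives indices $k<l$ with $\|v_k-v_l\|_*<\delta$; put $Q=B_{t-l+1}\cdots B_{t-k}\in\Sigma^{\,l-k}$, so $Qv_k=v_l$ and $\rho(Q)\le\mu^{\,l-k}\le\mu$, i.e.\ $Q\in K$. Then $(I-Q)v_k=v_k-v_l$, whence $1=\|v_k\|_*=\|(I-Q)^{-1}(I-Q)v_k\|_*\le C\,\|v_k-v_l\|_*<C\delta<1$, a contradiction. Hence $\bar\rho(\Sigma)\ge1=\rho(\Sigma)$.

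The step I expect to be the real obstacle is precisely the one that makes the last paragraph work: obtaining a constant $C$ that is \emph{uniform over products of all lengths}. A bare pigeonhole on the unit sphere only produces a short ``nearly cyclic'' factor $Q$ with $\|Q\|_*\le1$ and an approximate fixed vector, and that alone does not force $\rho(Q)$ to be near $1$ — a non-expansive operator can carry a good approximate eigenvector while having tiny spectral radius. What rescues the argument is passing to an extremal norm, which makes the whole generated semigroup relatively compact and so bounds the relevant quantity independently of the length of $Q$; this is exactly why the preliminary reduction to the irreducible case is needed. The remaining ingredients — Fekete's lemma, the block-triangular formula for $\rho$, and the existence of an extremal norm for irreducible sets — I would treat as standard and cite accordingly.
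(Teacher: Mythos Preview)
The paper does not prove this theorem: it is quoted as the Berger--Wang ``Joint Spectral Radius Theorem'' with a citation and then used as a known background result. There is therefore no in-paper proof to compare your attempt against.

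That said, your sketch is a correct proof, and it follows one of the standard modern routes to Berger--Wang (close in spirit to Elsner's argument): reduce to the irreducible case, pass to an extremal norm so that the generated semigroup $S$ is precompact, and then run a pigeonhole/compactness contradiction. The crucial step you yourself flagged --- getting a bound on $\|(I-Q)^{-1}\|_*$ that is uniform over products of all lengths --- is handled exactly right: once $\overline S$ is compact and the spectral radius is continuous, the sublevel set $K=\{Q\in\overline S:\rho(Q)\le\mu\}$ is compact, and the bound follows. Two small remarks. First, in the block-triangular reduction you only need the inequality $\bar\rho(\Sigma)\ge\bar\rho(\Sigma_i)$ for each diagonal block (trivial, since $\rho(A)\ge\rho(A_{ii})$), together with $\rho(\Sigma)=\max_i\rho(\Sigma_i)$; the full equality for $\bar\rho$ that you state is not needed, which spares you the subtlety that $\limsup_t\max_i$ and $\max_i\limsup_t$ can differ. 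Second, your observation that the $\limsup$ equals $\sup_t\sup_{A\in\Sigma^t}\rho(A)^{1/t}$ is what makes the contradiction hypothesis ``$\rho(A)\le\mu^t$ for \emph{all} $t$'' legitimate; this is used essentially and is worth stating as a lemma rather than a parenthetical.
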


Some attention has been recently given to a peculiarity in the alternative definition provided by Theorem \ref{thm-fond-jsr}: the maximal averaged spectral radius asymptotically converges towards the value of the joint spectral radius in limit superior, but not at every time steps.  Simple examples illustrate this fact, as for instance \begin{equation}
\Sigma=\left \{  \begin{pmatrix}

0& 1\\
0&0\end{pmatrix}
         ,\
\begin{pmatrix}

  0&0\\1 & 0
\end{pmatrix}\right \}.
\end{equation}

For this set of matrices $\rho(\Sigma)=1$ but all products of odd length have a spectral radius equal to zero.
It is thus natural to try to understand when the maximal averaged spectral radius actually converges or to give sufficient conditions for it. 

%

Moreover, the next proposition shows that the joint spectral radius can also be defined as the limit superior of the rate of growth of the \emph{maximal trace} in the semigroup:

\begin{proposition}\cite{chen-zhou-characterization}
For any finite set of matrices, the joint spectral radius satisfies
\begin{equation}
\rho(\Sigma)=\limsup_{t \rightarrow \infty}{\max_{A\in\Sigma^t}{\{\trace^{1/t}(A)\}}}.
\end{equation}
\end{proposition}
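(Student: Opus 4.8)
The plan is to establish the two inequalities separately. The upper bound $\limsup_{t\to\infty}\max_{A\in\Sigma^t}|\trace(A)|^{1/t}\le\rho(\Sigma)$ is elementary: for any matrix $A\in\re^{n\times n}$ one has $|\trace(A)|=\big|\sum_i\lambda_i(A)\big|\le n\,\rho(A)$, where the $\lambda_i(A)$ are the eigenvalues of $A$ counted with multiplicity and $\rho(A)$ is its ordinary spectral radius; moreover, for $A\in\Sigma^t$ one has $\rho(A)^{1/t}=\lim_{s\to\infty}\|A^s\|^{1/(ts)}\le\rho(\Sigma)$, since $A^s\in\Sigma^{ts}$. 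Hence $\max_{A\in\Sigma^t}|\trace(A)|^{1/t}\le n^{1/t}\rho(\Sigma)$, and letting $t\to\infty$ gives the claim.

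For the lower bound, fix $0<\epsilon<\rho(\Sigma)$ (the case $\rho(\Sigma)=0$ being covered by the upper bound). By the Joint Spectral Radius Theorem (Theorem \ref{thm-fond-jsr}) and the finiteness of $\Sigma^t$, there is an integer $t$ and a product $A\in\Sigma^t$ with $r:=\rho(A)>(\rho(\Sigma)-\epsilon)^t>0$. The idea is then to look at the powers $A^k\in\Sigma^{tk}$ and show that their trace is, for infinitely many $k$, a definite fraction of $r^k$. Write the eigenvalues of $A$ of modulus $r$ as $r e^{i\theta_1},\dots,r e^{i\theta_m}$ and the remaining ones as $\mu_1,\dots,\mu_{n-m}$, with $|\mu_j|\le r'<r$. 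Since $A$ is real, its traces are real and
\[
\trace(A^k)=r^k\,\real\!\Big(\sum_{j=1}^m e^{ik\theta_j}\Big)+\real\!\Big(\sum_{j=1}^{n-m}\mu_j^k\Big).
\]
The non-dominant term is bounded in modulus by $(n-m)(r')^k=o(r^k)$, and---this is the one delicate point---the dominant term could a priori cancel. To rule this out I would use Dirichlet's theorem on simultaneous Diophantine approximation (equivalently, a pigeonhole argument on the torus $\T^m$): there are infinitely many $k$ for which $k\theta_j$ is within, say, $\pi/3$ of a multiple of $2\pi$ for every $j$ at once, so that $\real(\sum_{j}e^{ik\theta_j})\ge m/2$. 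For such $k$, once $k$ is large enough that $(n-m)(r')^k\le (m/4)r^k$, we get $\trace(A^k)\ge (m/4)\,r^k>0$.

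Finally, along this subsequence $\trace(A^k)^{1/(tk)}\ge (m/4)^{1/(tk)}\,r^{1/t}\to r^{1/t}>\rho(\Sigma)-\epsilon$ as $k\to\infty$, while $tk\to\infty$; hence $\limsup_{T\to\infty}\max_{B\in\Sigma^T}\trace^{1/T}(B)\ge\rho(\Sigma)-\epsilon$, and letting $\epsilon\to0$ yields $\ge\rho(\Sigma)$. Combined with the upper bound this proves the identity (the $1/t$-th power of a possibly negative trace is immaterial here, since the maximum is attained at products with positive trace and the upper bound was proved for $|\trace|$). I expect the simultaneous-approximation step to be the only real obstacle; the contribution of the non-dominant eigenvalues and of any nontrivial Jordan blocks is harmless, because the trace sees only the eigenvalues and not the Jordan structure.
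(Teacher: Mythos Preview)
The paper does not supply its own proof of this proposition: it is merely quoted, with a citation to Chen and Zhou \cite{chen-zhou-characterization}, so there is nothing in the paper to compare your argument against.

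That said, your argument is correct and self-contained. The upper bound is exactly as you say. For the lower bound, the only point you flag as delicate---the Dirichlet/pigeonhole step guaranteeing infinitely many $k$ with $\real\big(\sum_j e^{ik\theta_j}\big)\ge m/2$---is indeed standard: by pigeonhole on the torus $\T^m$ there is some $q\ge 1$ with each $q\theta_j$ within any prescribed $\delta>0$ of a multiple of $2\pi$, and then the multiples $q,2q,\dots,\lfloor \pi/(3\delta)\rfloor q$ all satisfy the cosine bound; taking $\delta$ smaller and smaller yields arbitrarily many, hence infinitely many, such $k$. The subdominant eigenvalues contribute $o(r^k)$ exactly as you wrote, and Jordan structure is irrelevant since $\trace(A^k)=\sum_j\lambda_j(A)^k$. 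Your closing remark about the sign of the trace is also the right way to dispose of that ambiguity: along the subsequence you construct, the traces are strictly positive, which is all that is needed for the $\limsup$.
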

Here again, the joint spectral radius is defined as a limit superior, and one could wonder when the sequence actually converges.
A sufficient condition relying on the nonnegativity of the matrices has been proposed recently:
\begin{theorem}\label{thm-xu}\cite{xu-ela}
Let $\Sigma=\{A_1,\dots, A_n\}$ be a set of nonnegative matrices.  If one of them is primitive, then \begin{equation}\label{eq-thm-xu-thm}
\max_{A\in\Sigma^t}{\{\trace(A)^{1/t}\}} \quad \rightarrow \quad \rho(\Sigma)
\end{equation}
as $t\rightarrow \infty.$
\end{theorem}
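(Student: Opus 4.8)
My plan is to reduce the claim to a single inequality and then establish that inequality by a padding argument built around the primitive matrix. First, the trace characterization of Chen and Zhou recalled just above already supplies $\limsup_{t\to\infty}\max_{A\in\Sigma^{t}}\trace(A)^{1/t}=\rho(\Sigma)$, so the entire content of the theorem is the matching lower bound on the lower limit, namely $\liminf_{t\to\infty}\max_{A\in\Sigma^{t}}\trace(A)^{1/t}\ge\rho(\Sigma)$. The case $\rho(\Sigma)=0$ is immediate, since traces of nonnegative matrices are nonnegative, so I would fix $\epsilon\in(0,\rho(\Sigma))$ and, invoking Theorem~\ref{thm-fond-jsr}, pick an integer $s$ and a product $B\in\Sigma^{s}$ with $\rho(B)>(\rho(\Sigma)-\epsilon)^{s}$; in particular $\rho(B)>0$, hence $B^{N}\neq 0$ for every $N$.

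Second, I would use the primitive matrix $A_{1}$ purely as a ``length regulator''. Primitivity yields an exponent $m$ with $A_{1}^{m}$ entrywise positive, and since $A_{1}^{m}>0$ forbids zero rows in $A_{1}$ one gets $A_{1}^{r}>0$ for every $r\ge m$. Put $\epsilon_{0}:=\min_{m\le r<m+s}\min_{i,j}(A_{1}^{r})_{ij}>0$, a minimum of finitely many positive numbers. For every large $t$, write $t=r+Ns$ with $m\le r<m+s$ and $N\ge 0$ an integer, and consider the product $A_{1}^{r}B^{N}\in\Sigma^{t}$.

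Third comes the trace estimate, which is the heart of the matter. Writing $\mathbf{1}$ for the all-ones vector, one has $A_{1}^{r}\ge\epsilon_{0}\,\mathbf{1}\mathbf{1}^{T}$ entrywise, so, since $B^{N}\ge 0$, cyclicity of the trace gives
$$\trace(A_{1}^{r}B^{N})\ \ge\ \epsilon_{0}\,\trace(\mathbf{1}\mathbf{1}^{T}B^{N})\ =\ \epsilon_{0}\sum_{i,j}(B^{N})_{ij}\ \ge\ \epsilon_{0}\,\rho(B^{N})\ =\ \epsilon_{0}\,\rho(B)^{N},$$
where I used that the spectral radius of a nonnegative matrix is at most the sum of its entries. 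Hence $\max_{A\in\Sigma^{t}}\trace(A)^{1/t}\ge\epsilon_{0}^{1/t}\,(\rho(\Sigma)-\epsilon)^{sN/t}$, and since $sN=t-r$ with $r$ bounded we have $sN/t\to 1$ and $\epsilon_{0}^{1/t}\to 1$ as $t\to\infty$, so the lower limit is at least $\rho(\Sigma)-\epsilon$. Letting $\epsilon\to 0$ and combining with the Chen--Zhou equality yields the convergence to $\rho(\Sigma)$.

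The main obstacle, and the step where primitivity is genuinely needed, is exactly the passage from a subsequence of good lengths (all the limsup characterizations provide) to \emph{all} lengths $t$: simply appending an arbitrary short product to fill the length gap fails, because that short product may be nilpotent and annihilate the trace. Padding with a power of $A_{1}$ fixes the length modulo $s$, and the strict positivity of $A_{1}^{r}$ is precisely what legitimizes the rank-one sandwich $\trace(A_{1}^{r}B^{N})\ge\epsilon_{0}\sum_{i,j}(B^{N})_{ij}$; a merely nonnegative padding factor could leave the trace at zero. I would expect the same scheme to extend to a set with a common invariant cone $K$ one of whose members is $K$-primitive: replace $\mathbf{1}\mathbf{1}^{T}$ by a rank-one operator $u\,\phi$ with $u\in\inter K$ and $\phi$ a strictly positive functional on $K$ (both available because $A_{1}^{r}K\subset\inter K$), and replace the ``sum of entries'' by $\phi(B^{N}u)$, which the generalized Perron--Frobenius theorem (Theorem~\ref{thm-pf}) bounds below by a constant times $\rho(B)^{N}$; that is presumably the statement the note actually establishes.
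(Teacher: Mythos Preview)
Your argument is correct. Note, though, that the paper does not itself prove Theorem~\ref{thm-xu}: it is quoted from \cite{xu-ela}. What the paper does prove is the generalization to an arbitrary invariant cone (Theorem~\ref{thm-regularity}), and that proof follows a genuinely different route from yours. Rather than padding a single near-extremal product $B$ with powers of the primitive matrix, the paper works with the Kronecker lift: it shows (Lemma~\ref{lem-prim-kron}) that each $A_i^{\otimes k}$ is $K^{\otimes k}$-primitive whenever $A_i$ is $K$-primitive, hence the single matrix $S_k=A_1^{\otimes k}+\cdots+A_m^{\otimes k}$ is primitive, so by Perron--Frobenius $\trace(S_k^t)^{1/t}\to\rho(S_k)$. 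Expanding $S_k^t$ and using $\trace(A^{\otimes k})=\trace(A)^k$ yields $\trace(S_k^t)\le m^t\max_{A\in\Sigma^t}\trace(A)^k$, and then Theorem~\ref{thm-kron-jsr} supplies $\rho(S_k/m)^{1/k}\to\rho(\Sigma)$ as $k\to\infty$, closing the gap.

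Your approach buys simplicity and transparency in the nonnegative case: no Kronecker machinery, just the rank-one minorant $A_1^{r}\ge\epsilon_0\mathbf{1}\mathbf{1}^{T}$ and the row-sum bound $\rho(M)\le\sum_{i,j}M_{ij}$. The paper's approach buys a uniform passage to general cones without having to locate a specific rank-one operator $u\,\phi$ dominated by $A_1^{r}$ or to control $\phi(B^{N}u)$ from below; the Kronecker sum aggregates all length-$t$ products at once and reduces everything to the classical single-matrix Perron--Frobenius theorem. Your closing paragraph correctly sketches how the padding argument would extend to cones, but executing it requires the quantitative estimate $\phi(B^{N}u)\ge c\,\rho(B)^{N}$ for a $K$-nonnegative $B$, which is itself a Perron--Frobenius type statement; the Kronecker route avoids proving this directly.
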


In this section we generalize Theorem
\ref{thm-xu} to arbitrary invariant cones:

\begin{theorem}\label{thm-regularity}
Let a set of matrices $\Sigma=\{A_1,\dots,A_m\}$ share an invariant cone $K.$   If there is a matrix $A_i\in \Sigma$ which is \emph{$K$-primitive}, then, both the quantity
  $$
\max_{A\in\Sigma^t}{\{\trace^{1/t}(A)\}}$$ and $$
\max_{A\in\Sigma^t}{\{\rho^{1/t}(A)\}}$$
converge towards $\rho$ when $t$ tends to $\infty.$
\end{theorem}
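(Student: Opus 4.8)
The plan is to exploit the generalized Perron--Frobenius theorem (Theorem~\ref{thm-pf}) for the $K$-primitive matrix $A_i$ together with the sub-multiplicativity that underlies the definition of $\rho$. Since $\trace(A)\le n\,\rho(A)$ and $\rho(A)^{1/t}\le\|A\|^{1/t}$, the two quantities in the statement are both bounded above by something tending to $\rho(\Sigma)$ (the upper bound $\limsup\le\rho$ is classical); the whole difficulty is the lower bound, i.e.\ producing, for \emph{every} large $t$, a product $A\in\Sigma^t$ whose spectral radius (or trace) is at least $(\rho-\epsilon)^t$. First I would reduce to the case $\rho(\Sigma)=1$ by scaling, and dispose of the trivial case $\rho=0$. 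By the Berger--Wang theorem (Theorem~\ref{thm-fond-jsr}) there is a sequence of products $B_{s}\in\Sigma^{t_s}$ with $\rho(B_s)^{1/t_s}\to 1$; fix $\epsilon>0$ and pick one such $B=B_{i_1}\cdots B_{i_\ell}\in\Sigma^{\ell}$ with $\rho(B)\ge(1-\epsilon)^{\ell}$, of some fixed length $\ell$.

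Next I would use primitivity to ``repair'' arbitrary lengths. Let $A_i$ be $K$-primitive, so $A_i^{q}$ is $K$-positive for some $q$. The key geometric fact is that a $K$-positive matrix $P$ maps all of $K\setminus\{0\}$ into $\inter K$, hence for any fixed nonzero $v\in K$ and $w$ in the dual cone $K^*$ one has $w^{T}P v>0$; more quantitatively, there is a constant $c>0$ (depending only on $P$, $v$, $w$) with $w^{T}P x\ge c\,\|x\|$ for all $x\in K$. Now given a target length $t$, write $t=m\ell + r q + (\text{small remainder})$ and consider the product $C_t = B^{m} A_i^{rq}\cdot(\text{filler})$, arranged so that $C_t\in\Sigma^{t}$ and contains at least one block $A_i^{q}$. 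Because $B$ is $K$-nonnegative with a large ``gain'' — more precisely, since $\rho(B)\ge(1-\epsilon)^\ell$ and $B$ leaves the proper cone $K$ invariant, Perron--Frobenius for $K$-nonnegative matrices gives an eigenvector $u\in K$ with $Bu=\rho(B)u$ — we can estimate $\trace(C_t)$ from below by $w^{T}C_t v$-type quantities: route a vector through the Perron eigenvector $u$ of $B$, pick up the factor $\rho(B)^{m}\ge(1-\epsilon)^{m\ell}$, and use the $K$-positivity of the inserted $A_i^{q}$ block to guarantee the relevant inner products are bounded below by a constant independent of $t$. Taking $t$-th roots, $\trace(C_t)^{1/t}\ge (1-\epsilon)^{m\ell/t}\cdot(\text{const})^{1/t}\to 1-\epsilon$ (using $m\ell/t\to 1$), and the same $C_t$ also satisfies $\rho(C_t)\ge \trace(C_t)/n$, handling the spectral-radius statement simultaneously.

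I would organize the write-up as: (1)~normalization $\rho=1$ and the easy upper bounds; (2)~a lemma that a $K$-positive matrix $P$ satisfies $w^{T}Px\ge c\|x\|$ uniformly on $K$ for suitable $w\in K^*$, $v\in K$ — this is essentially compactness plus openness of $\inter K$; (3)~extraction of a good block $B$ of fixed length from Berger--Wang with $\rho(B)\ge(1-\epsilon)^{|B|}$ and its Perron eigenvector $u\in K$; (4)~the length-matching construction of $C_t$ and the trace lower bound; (5)~take roots and let $\epsilon\to0$. The main obstacle is step~(4): making sure that inserting the $K$-positive block does not \emph{kill} the Perron gain of $B$ — one must route the estimate through $u$ (so the $B^{m}$ part contributes $\rho(B)^{m}$ exactly, not a potentially smaller norm of a non-symmetric power) and then apply $K$-positivity of $A_i^{q}$ to the vector $B^{m}$-image, which lies in $K$, to get a uniform lower bound; controlling the short ``filler'' products (they only contribute a bounded multiplicative factor, vanishing after the $t$-th root) is routine but needs the matrices to be, say, arranged so no product is exactly nilpotent on the relevant vector, which $K$-positivity again secures. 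A secondary subtlety is that $K$ need not be self-dual, so one works with $K^{*}$ explicitly and uses $\trace(M)\ge w^{T}Mv/(\text{something})$ via a spanning family of rank-one terms $v_jw_j^{T}$ adapted to $K$ and $K^{*}$.
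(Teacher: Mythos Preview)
Your block-construction strategy --- pad a ``good'' product $B\in\Sigma^{\ell}$ (extracted via Berger--Wang) with powers of the $K$-primitive matrix $A_i$ --- is a genuinely different route from the paper's, and for the \emph{spectral radius} part it works. With $u\in K$ the Perron eigenvector of $B$, the cyclic rearrangement $A_i^{r}B^{m}$ satisfies $A_i^{r}B^{m}u=\rho(B)^{m}A_i^{r}u\ge_K \alpha\,\rho(B)^{m}u$ (because $A_i^{r}u\in\inter K$), and the Collatz--Wielandt inequality for cone-preserving maps gives $\rho(C_t)\ge\alpha\,\rho(B)^{m}$; taking $t$-th roots finishes. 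The paper instead proves the trace statement first via Kronecker powers and the Blondel--Nesterov approximation $\rho(\Sigma)=\lim_k\rho\bigl(\sum_j A_j^{\otimes k}\bigr)^{1/k}$, then deduces the spectral-radius statement from $\rho(A)\ge\trace(A)/n$.

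There is, however, a real gap in your treatment of the \emph{trace}. You propose to lower-bound $\trace(C_t)$ by $w^{T}C_t v$ using ``a spanning family of rank-one terms $v_jw_j^{T}$ adapted to $K$ and $K^{*}$''. Such a decomposition $I=\sum_j c_j v_jw_j^{T}$ with $c_j>0$, $v_j\in K$, $w_j\in K^{*}$ does \emph{not} exist for a general proper cone: if it did, every $K$-nonnegative matrix would have nonnegative trace, since $\trace(M)=\sum_j c_j\,w_j^{T}Mv_j\ge 0$. But for the Lorentz cone $K=\{x\in\re^{3}:x_1\ge\sqrt{x_2^{2}+x_3^{2}}\}$ the matrix $M=\mathrm{diag}(3/2,\,-1,\,-1)$ is even $K$-positive yet has $\trace(M)=-1/2<0$. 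So the step ``$\trace(M)\ge w^{T}Mv/(\text{something})$'' fails in general, and with it your lower bound on $\trace(C_t)$; since you then derive the spectral-radius claim from the trace claim, the whole argument as written collapses. (Your spectral-radius construction can be rescued independently as above, but not the trace.)

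This is exactly the obstruction that the paper's Kronecker-product argument circumvents: rather than bounding the trace of a \emph{single} product, it bounds the \emph{sum} $\sum_{A\in\Sigma^{t}}(\trace A)^{k}=\trace\bigl(\sum_j A_j^{\otimes k}\bigr)^{t}$ from below using that $\sum_j A_j^{\otimes k}$ is $K^{\otimes k}$-primitive (so the generalized Perron--Frobenius theorem controls its powers), and then passes to the maximum. The averaging over all products, together with the Kronecker lift, is what makes the trace estimate survive for cones beyond the orthant.
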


The proof 
is inspired from developments in \cite{xu-ela} and results from \cite{blondel-kron} which we now recall.

\begin{definition}\textbf{Kronecker product.}\index{Kronecker!product}\index{Kronecker!power}
Let $A\in\re^{n_1\times n_2},\ B\in \re^{m_1 \times m_2}.$ The \emph{Kronecker product} of $A$ and $B$ is a matrix in $\re^{n_1m_1\times n_2m_2}$ defined as
$$(A \kron B) \triangleq \begin{pmatrix} A_{1,1}B&\dots & A_{1,n_2}B \\ \vdots &\vdots &\vdots \\ A_{n_1,1}B&\dots & A_{n_1,n_2}B \end{pmatrix}.$$
The \emph{$k$-th Kronecker power of $A$,} denoted $A^{\kron k},$ is defined inductively as
$$A^{{\kron k}}=A\kron A^{\kron (k-1)} \quad A^{\kron 1}=A.
$$
\end{definition}

\begin{proposition}\label{prop-kron}
\begin{enumerate}
\item For any matrices $A,B,C,D,$ $$(A\otimes B)(C\otimes D)=(AC)\otimes (BD). $$
\item For any matrix $A,$ we have $$ \trace{(A^{\otimes k})}=\trace^k{(A)}.$$ \end{enumerate}
\end{proposition}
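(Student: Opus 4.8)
The plan is to treat the two parts separately, both by direct entrywise computation, since each is a routine consequence of the block definition of the Kronecker product.

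For the first identity I would index the Kronecker product by pairs. Writing $(A\kron B)$ with row index $(i,p)$ and column index $(j,q)$, the definition gives $(A\kron B)_{(i,p),(j,q)}=A_{ij}B_{pq}$. Computing the $((i,p),(k,l))$ entry of the product $(A\kron B)(C\kron D)$ by the usual row-times-column rule and summing over the pair index $(j,q)$, one gets
$$\sum_{j,q}A_{ij}B_{pq}C_{jk}D_{ql}=\Big(\sum_j A_{ij}C_{jk}\Big)\Big(\sum_q B_{pq}D_{ql}\Big)=(AC)_{ik}(BD)_{pl},$$
where the factorization of the double sum into a product of two single sums is the one algebraic step that matters. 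The right-hand side is precisely the $((i,p),(k,l))$ entry of $(AC)\kron(BD)$, so the two matrices agree entrywise. The only thing to verify is that the block dimensions line up so that the products are defined, which is implicit in the statement.

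For the second identity I would first record the auxiliary fact $\trace(A\kron B)=\trace(A)\trace(B)$: the diagonal entries of $A\kron B$ are exactly the products $A_{ii}B_{pp}$ indexed by the pairs $(i,p)$, and summing over all such pairs factors as $\big(\sum_i A_{ii}\big)\big(\sum_p B_{pp}\big)$. The claim then follows by induction on $k$, the base case $k=1$ being trivial. Using the inductive definition $A^{\kron k}=A\kron A^{\kron(k-1)}$ together with the auxiliary fact and the induction hypothesis,
$$\trace(A^{\kron k})=\trace(A)\,\trace(A^{\kron(k-1)})=\trace(A)\cdot\trace^{k-1}(A)=\trace^{k}(A).$$

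There is no genuine obstacle here; the proposition is elementary and the work is entirely bookkeeping. The only point demanding a little care is fixing a consistent pairing convention for the row and column indices of the Kronecker product, so that the mixed-product rule in the first part and the diagonal description in the second part are both read off cleanly and unambiguously from the block definition.
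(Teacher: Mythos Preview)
Your proof is correct; this proposition collects standard Kronecker-product identities, and the paper states it without proof, treating it as folklore. Your entrywise computation via the double-index convention is the canonical argument and fills the gap cleanly.
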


The main reason for introducing Kronecker products is the following theorem:
\begin{theorem}\label{thm-kron-jsr}\cite{blondel-kron}
Let $\Sigma=\{A_1,\dots, A_m\}$ be a set of matrices that leaves a cone invariant.  Then, we have the following property:
\begin{equation}
\rho(\Sigma)=\lim_{k\rightarrow \infty}{\rho^{1/k}(A_1^{\otimes k}+\cdots + A_m^{\otimes k})}.
\end{equation}
\end{theorem}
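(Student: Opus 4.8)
The plan is to set $B_k \triangleq A_1^{\kron k}+\cdots+A_m^{\kron k}$ and to squeeze $\rho^{1/k}(B_k)$ between $\rho(\Sigma)$ from both sides as $k\to\infty$. The one computation feeding both directions is the product rule of Proposition \ref{prop-kron}(1): since $A^{\kron k}C^{\kron k}=(AC)^{\kron k}$, expanding the $t$-th power of the sum collapses all the cross terms into Kronecker powers of products, giving
\[
B_k^{\,t}=\sum_{A\in\Sigma^t}A^{\kron k},
\]
where the sum ranges over the $m^t$ products of length $t$. The upper and lower estimates then differ only in how this identity is exploited: via norms for the upper bound, via the invariant cone for the lower bound.

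For the upper bound I would proceed \emph{without} the cone hypothesis. Equip $\re^{n^k}$ with the spectral (operator) norm, for which the multiplicativity of singular values under $\kron$ gives $\|A^{\kron k}\|=\|A\|^{k}$. The triangle inequality applied to the displayed identity yields $\|B_k^{\,t}\|\le \sum_{A\in\Sigma^t}\|A\|^{k}\le m^{t}\bigl(\max_{A\in\Sigma^t}\|A\|^{1/t}\bigr)^{kt}$, so that, since $\rho(B_k)=\rho(B_k^{\,t})^{1/t}\le\|B_k^{\,t}\|^{1/t}$,
\[
\rho(B_k)^{1/k}\ \le\ m^{1/k}\,\max_{A\in\Sigma^t}\|A\|^{1/t}.
\]
Letting $k\to\infty$ kills the factor $m^{1/k}$, and then letting $t\to\infty$ invokes the very definition \eqref{eqn-def-radius} of $\rho(\Sigma)$, giving $\limsup_k \rho(B_k)^{1/k}\le\rho(\Sigma)$.

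For the lower bound I use the invariant cone. The relevant object is the tensor cone $K^{\kron k}\subset\re^{n^k}$, the closed convex cone generated by the pure tensors $v_1\kron\cdots\kron v_k$ with $v_i\in K$; one checks it is again a proper cone and that each $A^{\kron k}$, hence each summand of $B_k^{\,t}$ as well as $B_k^{\,t}$ itself, leaves it invariant. Fix $t$ and a product $A\in\Sigma^t$ with $\rho(A)>0$. Being a product of $K$-nonnegative matrices, $A$ is $K$-nonnegative, so the existence part of the Perron--Frobenius theorem for cones \cite{schneidertam} provides a nonzero $w\in K$ with $Aw=\rho(A)\,w$; then $w^{\kron k}\in K^{\kron k}\setminus\{0\}$ and Proposition \ref{prop-kron}(1) gives $A^{\kron k}w^{\kron k}=(Aw)^{\kron k}=\rho(A)^{k}\,w^{\kron k}$. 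Every other summand $A'^{\kron k}$ sends $w^{\kron k}$ into $K^{\kron k}$, so
\[
B_k^{\,t}\,w^{\kron k}\ \geq_{K^{\kron k}}\ \rho(A)^{k}\,w^{\kron k}.
\]
Applying Lemma \ref{lem-conic} to the singleton $\{B_k^{\,t}\}$ with cone $K^{\kron k}$ (for which the subradius equals the spectral radius) gives $\rho(B_k)^{t}=\rho(B_k^{\,t})\geq\rho(A)^{k}$, i.e. $\rho(B_k)^{1/k}\geq\rho(A)^{1/t}$. Maximizing over $A\in\Sigma^t$ and letting $t\to\infty$, Theorem \ref{thm-fond-jsr} yields $\rho(B_k)^{1/k}\geq\rho(\Sigma)$ for every $k$ (the case $\rho(\Sigma)=0$ is trivial). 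Combined with the upper bound this forces $\lim_k \rho(B_k)^{1/k}=\rho(\Sigma)$.

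I expect the main obstacle to be the cone bookkeeping in the lower bound: verifying carefully that $K^{\kron k}$ is a genuine \emph{proper} cone (closed, solid, and pointed) invariant under each $A^{\kron k}$, and invoking the existence of a Perron eigenvector in $K$ for the \emph{possibly non-primitive} products $A$ rather than only the sharper statement of Theorem \ref{thm-pf}. The monotonicity step is then painless, as it is handed to us directly by Lemma \ref{lem-conic}. The upper bound is comparatively routine once one records that the spectral norm is multiplicative under $\kron$, and notably needs no cone assumption at all.
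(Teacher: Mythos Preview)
The paper does not actually prove Theorem~\ref{thm-kron-jsr}; it is quoted from \cite{blondel-kron} as a ready-made tool for the proof of Theorem~\ref{thm-regularity}. Your argument is correct and is essentially the one given in the original source: the upper bound via norm submultiplicativity (which indeed needs no cone hypothesis and, once you let $t\to\infty$ with $k$ fixed, even yields the quantitative two-sided estimate $\rho(\Sigma)\le\rho(B_k)^{1/k}\le m^{1/k}\rho(\Sigma)$ for every $k$), and the lower bound via a Perron eigenvector lifted to the tensor cone $K^{\kron k}$. The two points you flag as delicate---properness of $K^{\kron k}$ and the existence of a Perron eigenvector in $K$ for a merely $K$-nonnegative (not necessarily primitive) product---are both standard: the former is \cite[Lemma~4]{blondel-kron} (restated here in the proof of Lemma~\ref{lem-prim-kron}), and the latter is the Krein--Rutman theorem, see \cite{schneidertam}.
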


Before to prove Theorem \ref{thm-regularity}, we prove a last technical lemma that will be necessary in the proof.
\begin{lemma}\label{lem-prim-kron}
Let $K$ be a proper cone.  If a matrix $A$ is $K$-primitive, then for any $k,$ $A^{\otimes k}$ is $K^{\otimes k}$-primitive,where $$K^{\otimes k}\triangleq \conv\{x_1\otimes\cdots\otimes x_k:x_i\in K,\ i=1,\dots, k \}$$ is a proper cone. 
\end{lemma}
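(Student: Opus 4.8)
The plan is to prove the statement in two stages: that $K^{\otimes k}$ is a proper cone, and that $A^{\otimes k}$ is $K^{\otimes k}$-primitive. For the first stage, $K^{\otimes k}$ is by construction a convex cone, so what needs checking is that it is closed, pointed, and full-dimensional. Since $K$ is proper, there is a linear functional $f$ with $f>0$ on $K\setminus\{0\}$; define $g$ on the tensor space by $g(x_1\otimes\cdots\otimes x_k)=f(x_1)\cdots f(x_k)$ on pure tensors, extended linearly. Then $g>0$ on $K^{\otimes k}\setminus\{0\}$ (every element is a positive combination of nonzero pure tensors of $K$-vectors, on each of which $g$ is a product of positive numbers), which gives pointedness at once. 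Taking $B=\{x\in K:f(x)=1\}$, a compact base of $K$, the set of pure tensors $b_1\otimes\cdots\otimes b_k$ with $b_i\in B$ is compact and avoids $0$; one checks that $K^{\otimes k}$ is exactly the cone generated by the convex hull of this set, and the cone over a compact convex set missing the origin is closed. Full-dimensionality is immediate because a basis $v^{(1)},\dots,v^{(n)}$ of $\re^n$ lying in $K$ (which exists since $\inter K\neq\emptyset$) produces the $n^k$ linearly independent pure tensors $v^{(j_1)}\otimes\cdots\otimes v^{(j_k)}\in K^{\otimes k}$, so this convex cone contains a full-dimensional simplex.

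The heart of the proof is the claim that if $y_1,\dots,y_k\in\inter K$ then $y_1\otimes\cdots\otimes y_k\in\inter(K^{\otimes k})$. I would establish this via the dual cone: a point $p$ lies in the interior of a convex cone $C$ precisely when $\langle\Phi,p\rangle>0$ for every nonzero $\Phi\in C^{\ast}$, and here $(K^{\otimes k})^{\ast}$ is exactly the set of $k$-linear forms $\Phi$ with $\Phi(x_1,\dots,x_k)\ge 0$ whenever all $x_i\in K$ (because $K^{\otimes k}$ is generated by those pure tensors). So it suffices to show that a nonzero $k$-linear form $\Phi$ that is nonnegative on $K^{\times k}$ cannot vanish at any point of $(\inter K)^k$. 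This I would prove by induction on $k$: if $\Phi(y_1,\dots,y_k)=0$ with $y_i\in\inter K$, then perturbing $y_1$ inside $\inter K$ in an arbitrary direction $v$ and using nonnegativity forces $\Phi(v,y_2,\dots,y_k)=0$ for every $v\in\re^n$; consequently, for each fixed $v\in K$ the $(k-1)$-linear form $\Phi(v,\cdot,\dots,\cdot)$ is nonnegative on $K^{\times(k-1)}$ and vanishes at $(y_2,\dots,y_k)\in(\inter K)^{k-1}$, hence is identically zero by the inductive hypothesis; since $K$ spans $\re^n$, this forces $\Phi\equiv 0$, a contradiction. (The base case $k=1$ is just the perturbation argument.)

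Given the claim, primitivity follows quickly. Since $A$ is $K$-primitive there is a $T$ with $A^T$ being $K$-positive; write $B=A^T$. By part 1 of Proposition \ref{prop-kron}, $(A^{\otimes k})^T=(A^T)^{\otimes k}=B^{\otimes k}$ and $B^{\otimes k}(x_1\otimes\cdots\otimes x_k)=(Bx_1)\otimes\cdots\otimes(Bx_k)$. Take any nonzero $z\in K^{\otimes k}$ and write it as a positive combination $\sum_j\lambda_j\,x_1^{(j)}\otimes\cdots\otimes x_k^{(j)}$ of nonzero pure tensors with all $x_i^{(j)}\in K\setminus\{0\}$ (at least one term survives since $z\neq 0$). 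Then $B^{\otimes k}z=\sum_j\lambda_j\,(Bx_1^{(j)})\otimes\cdots\otimes(Bx_k^{(j)})$, where every $Bx_i^{(j)}\in\inter K$, so each summand lies in $\inter(K^{\otimes k})$ by the claim; a positive combination of points of the convex cone $K^{\otimes k}$, at least one of which is interior, is again interior, so $B^{\otimes k}z\in\inter(K^{\otimes k})$. Hence $(A^{\otimes k})^T$ maps $K^{\otimes k}\setminus\{0\}$ into $\inter(K^{\otimes k})$, i.e.\ $A^{\otimes k}$ is $K^{\otimes k}$-primitive. I expect the main obstacle to be exactly the claim of the second paragraph — identifying $(K^{\otimes k})^{\ast}$ with positive multilinear forms and running the induction on the number of tensor factors; everything else is bookkeeping with Proposition \ref{prop-kron} and elementary convexity.
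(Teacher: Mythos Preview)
Your proof is correct and shares the same core idea as the paper's: the crucial step is that $y_i\in\inter K$ implies $y_1\otimes\cdots\otimes y_k\in\inter(K^{\otimes k})$, and both arguments establish this by taking a supporting functional in the dual cone and using small perturbations of the interior points to force that functional to vanish identically. The differences are organizational rather than substantive: you prove properness of $K^{\otimes k}$ directly (the paper cites \cite{blondel-kron}), you run a straight induction on $k$ peeling off one factor at a time, whereas the paper generalizes to possibly distinct cones $K_1,\dots,K_k$ so as to reduce to the case $k=2$, and you handle a general $z\in K^{\otimes k}\setminus\{0\}$ by decomposing it into pure tensors while the paper restricts to extremal points---but the underlying separating-hyperplane-plus-perturbation mechanism is identical.
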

\begin{proof}
The fact that $K^{\otimes k}$ is a proper cone is proved in \cite[Lemma 4]{blondel-kron}.

It is clear that if $A$ is $K$-nonnegative, then $A^{\otimes k}$ is $K^{\otimes k}$-nonnegative, since $$A^{\otimes k} (x_1\otimes \dots\otimes x_k)= Ax_1\otimes\dots\otimes Ax_k. $$
It remains to show that all extremal points of $K^{\otimes k}$ are mapped in the interior of $K^{\otimes k}.$\\
So, let us take an extremal point $\tilde x= x_1\otimes \dots \otimes x_k$ of $K^{\otimes k},$ and show that for some $t>0,$ $(A^{\otimes k})^t \tilde x \in \inter K^{\otimes k}.$  Since $$(A^{\otimes k})^t \tilde x = A^tx_1\otimes \dots \otimes A^tx_k$$ and $A$ is primitive, it suffices to show that $$x_i\in \inter K\quad \Rightarrow \quad x_1\otimes \dots \otimes x_k\in \inter K^{\otimes k}. $$\\
We will in fact show the slightly stronger property that for any set of proper cones $K_i,$ $$x_i\in \inter K_i\quad \Rightarrow \quad x_1\otimes \dots \otimes x_k\in \inter (K_1\otimes \dots \otimes K_k), $$ where $$K_1\otimes \dots \otimes K_k\triangleq \co{\{x_1\otimes \dots \otimes x_k:x_i\in K_i\}}.$$  By induction, it is sufficient to prove it for $k=2.$
Let us be given two proper cones $K_1\in \re^{n_1},\, K_2\in \re^{n_2},$ $x_i\in \inter(K_i).$ Suppose by contradiction that $ x_1\otimes x_2 \in K_1\otimes K_2 \setminus \inter (K_1\otimes K_2).$  Then, there exists a vector $z\in \re^{n_1\times n_2},z\neq 0,$ such that $(z, x_1\otimes x_2)=0,\, (z,w)\geq 0\,  \forall w\in K_1\otimes K_2. $  Now, since for any $i,$ $1\leq i \leq n_1,$ there exists a $\lambda$ small enough such that $ (x_1+\lambda e_i)\otimes x_2\in K_1{\otimes K_2}$ ($e_i$ is the $i$th standard basis vector); this implies that $$ (z,(x_1+\lambda e_i))\otimes x_2 =(z,x_1\otimes x_2) +(z,\lambda e_i\otimes x_2)= (z,\lambda e_i\otimes x_2) \geq 0 ,$$ and thus  $(z, e_i\otimes x_2) =0$ (because $\lambda$ can be positive and negative). For the same reason, for any $j,$ $(z, x_1\otimes e_j) =0.$ Now, \begin{eqnarray} \nonumber (z,(x_1+\lambda e_i)\otimes (x_2+\gamma e_j) &=&(z,x_1\otimes x_2) +(z,\lambda e_i\otimes x_2)\\ \nonumber &&+(z, x_1\otimes \gamma e_j)+(z,\lambda \gamma e_i\otimes e_j)\\ \nonumber &=&(z,\lambda \gamma e_i\otimes e_j)\\ \nonumber & \geq & 0.\end{eqnarray}Thus, $(z,e_i\otimes e_j)=0$ for all $i,j,$ which implies that $z=0,$ a contradiction.
\end{proof}

We are now in position to prove the main result of this section.

\begin{proof} (Proof of Theorem \ref{thm-regularity})
We first \textbf{claim} that for any natural number $k,$ the $k$th Kronecker powers of the matrices in $\Sigma$ satisfy the property $$\lim_{t\rightarrow \infty}{\trace^{1/t}{(A_1^{\otimes k}+ \cdots + A_m^{\otimes k})^t}}=\rho(A_1^{\otimes k}+ \cdots + A_m^{\otimes k}).$$\\
By Lemma \ref{lem-prim-kron} above, one of the matrices $A_i^{\otimes k}$ is $K^{\otimes k}$-primitive. Moreover, it is well known that a sum of nonnegative matrices, one of which is primitive, is actually also primitive, thus, so is $$A_1^{\otimes k}+ \cdots + A_m^{\otimes k}.$$  Finally, by Theorem \ref{thm-pf}, it is clear that $$\lim_{t\rightarrow \infty} (\trace{(P^t)}^{1/t}) =\rho(P),$$ and the claim is proved.

Now, remark that for any $k,t\in \n,$  \begin{equation}\trace{((\sum_{A \in \Sigma } A^{\otimes k})^t)}=\sum_{A\in \Sigma^t}\trace{(A^{\otimes k})} \leq m^t (\max_{A\in \Sigma^t}{\trace^k(A)}).\end{equation}
For deriving the above relations, we successively used items 1 and 2 of Proposition \ref{prop-kron}. Dividing this equation by $m^t$ and taking to the power $1/(kt),$ we obtain 
\begin{equation}\label{eq-thm-xu}\trace^{1/(t k)}{((\sum_{A \in \Sigma } A^{\otimes k}/m)^t)}\leq  \max_{A\in \Sigma^t}{\{\trace^{1/t}(A)\}}.\end{equation}  Now, for any $k\in \n,$ the lefthand side in (\ref{eq-thm-xu}) tends towards $\rho^{1/k}(\sum_{A \in \Sigma } A^{\otimes k}/m)$ as $t\rightarrow \infty.$\\ This proves part one of the theorem, as the latter quantity is arbitrarily close to $\rho(\Sigma)$ for large $k$ (Lemma \ref{thm-kron-jsr}).

For the second part of the theorem, since $\rho(A)\geq \trace{A}/n$ ($n$ is the dimension of the matrices), we have $$
\max_{A\in\Sigma^t}{\{(\trace(A)/n)^{1/t}\}} \leq  \max_{A\in\Sigma^t}{\{\rho(A)^{1/t}\}},$$ which immediately implies that $\max_{A\in\Sigma^t}\{\rho^{1/t}(A)\}\rightarrow \rho$ when $t\rightarrow \infty.$
%

\end{proof}

\newcommand{\noopsort}[1]{} 
\newcommand{\singleletter}[1]{#1}

\end{document}